\documentclass{amsart}
\linespread{1.15}

\usepackage{amsmath, amsthm, amssymb}
\usepackage[all]{xy}
\usepackage{imakeidx}
\usepackage{multicol}
\newtheorem{Teo}{Theorem}[section]
\newtheorem{Prop}[Teo]{Proposition}
\newtheorem{Lema}[Teo]{Lemma}

\theoremstyle{definition}
\newtheorem{Def}[Teo]{Definition}

\newtheorem{Obs}[Teo]{Remark}

\newcommand{\N}{\mathbb{N}}
\newcommand{\C}{\mathbb{C}}

\newcommand{\Llr}{\Longleftrightarrow}
\newcommand{\lra}{\longrightarrow}

\newcommand{\VR}{\mathcal{O}}

\begin{document}
\title{The Scott topology of a rooted non-metric tree}

\date{}

\author{Josnei Novacoski}
\address{CAPES Foundation  \newline \indent Ministry of Education of Brazil \newline \indent Bras\'ilia/DF 70040-020 \newline \indent Brazil}
\email{jan328@mail.usask.ca}

\thanks{During the realization of this project the author was supported by a research grant from the program ``Ci\^encia sem Fronteiras" from the Brazilian government.}
\keywords{Valuative tree, non-metric tree, Scott topology, weak tree topology}
\subjclass[2010]{Primary 06F30 Secondary 54F65, 06B35}

\begin{abstract} 
In this paper we prove that the Scott topology $\mathfrak S$ on a rooted non-metric tree $\mathcal T$ is strictly coarser than the weak tree topology. Moreover, for each $t\in \mathcal T$, we consider a natural order $\preceq_t$ on $\mathcal T$ under which $t$ is the root of $\mathcal T$. Then the weak tree topology is generated by the union of the Scott topologies $\mathfrak S_t$ associated to $\preceq_t$.
\end{abstract}

\maketitle

\section{Introduction}
During my presentation at ``ALANT 3 -- Joint Conferences on Algebra, Logic and Number Theory" in B\c{e}dlewo, Poland, many questions were raised about the different topologies on a rooted non-metric tree (see Definition \ref{ROTNONTR}). For instance, is the weak tree topology (see Definition \ref{WTTOP}) the same as the Scott topology (see Definition \ref{SCTOP})? This paper serves to answer those questions.

Favre and Jonsson introduced the valuative tree in \cite{Fav_1}. They considered valuations centered at the ring $\C[[x,y]]$ of the formal Laurent series in two variables over the field of complex numbers. In order to axiomatize some properties of this object, they introduced the concept of a rooted non-metric tree. In \cite{Gra}, Granja studied the equivalent case for valuations centered at a fixed two-dimensional regular local ring. In both works, the definition of rooted non-metric tree is not satisfactory (see discussion about that in \cite{JNVT}). In \cite{JNVT}, we complete this definition and compare some natural topologies on a rooted non-metric tree.

Since a rooted non-metric tree $(\mathcal T,\preceq)$ is, by definition, a partially ordered set we can consider the Scott topology on it. In this paper, we prove the following:

\begin{Teo}\label{COMPTOP}
The Scott topology on $\mathcal T$ is strictly coarser than the weak tree topology.
\end{Teo}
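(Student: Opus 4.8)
The plan is to prove the two halves separately: that every Scott-open set is open in the weak tree topology (so $\mathfrak S$ is coarser), and that some weak-tree-open set fails to be Scott-open (so the inclusion is strict). Throughout I write $U_{\succeq s}=\{q\in\mathcal T: q\succeq s\}$ and $U_{\succ s}=\{q\in\mathcal T: q\succ s\}$, and I denote the weak tree topology by $\mathfrak W$.

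For the inclusion $\mathfrak S\subseteq\mathfrak W$, I would fix a Scott-open set $U$ and a point $t\in U$ and produce a basic weak-open neighbourhood of $t$ contained in $U$. Since $U$ is Scott-open it is in particular an up-set, so $U_{\succeq s}\subseteq U$ whenever $s\in U$. The key point is to locate some $s\prec t$ that already lies in $U$: because $\mathcal T$ is a non-metric tree, the segment from the root to $t$ is order-isomorphic to a real interval, so for $t$ not the root the chain $C_t:=\{s\in\mathcal T: s\prec t\}$ is nonempty, directed (being totally ordered) and satisfies $\sup C_t=t$. As $U$ is Scott-open and $t=\sup C_t\in U$, the directed set $C_t$ must meet $U$; pick $s\in C_t\cap U$. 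Then $U_{\succ s}=\bigcup_{\vec v}U(\vec v)$, the union over the tangent directions $\vec v$ at $s$ pointing away from the root, is weak-open, contains $t$ (since $t\succ s$), and is contained in $U_{\succeq s}\subseteq U$. In the remaining case $t=\text{root}$, the up-set property forces $U=\mathcal T$, which is open in every topology. Thus each point of $U$ has a weak-open neighbourhood inside $U$, so $U\in\mathfrak W$.

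For strictness I would exhibit a single weak-open set that is not an up-set, since every Scott-open set is necessarily an up-set. Assuming $\mathcal T$ has more than one point, choose a non-root point $p$ and consider the set $U(\vec v_0)=\mathcal T\setminus U_{\succeq p}=\{q\in\mathcal T: p\not\preceq q\}$ attached to the tangent direction $\vec v_0$ at $p$ pointing toward the root; this is a subbasic weak-open set. It is not an up-set: any $q\prec p$ satisfies $p\not\preceq q$, hence $q\in U(\vec v_0)$, whereas $q\preceq p$ with $p\notin U(\vec v_0)$. Therefore $U(\vec v_0)$ is weak-open but not Scott-open, which gives the strict inclusion.

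The routine parts are the verifications that $U_{\succ s}$ is weak-open (it is a union of subbasic sets $U(\vec v)$) and that $U(\vec v_0)$ is the claimed subbasic set. The step I expect to be the crux is the use of Scott inaccessibility in the first half: identifying the correct directed set $C_t=\{s\prec t\}$ and invoking the tree axioms to guarantee $\sup C_t=t$, which is precisely what forces a predecessor of $t$ into $U$ and then lets the up-set property close the argument.
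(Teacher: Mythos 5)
Your proposal is correct and takes essentially the same route as the paper: for coarseness, the paper also uses the directed chain below a point $t\in U$ (its $\mathcal D=\{s\prec t\}$ with $\sup\mathcal D=t$) plus Scott inaccessibility to find a predecessor $s\in U$, and then closes with the up-set property — the only cosmetic difference being that the paper uses the single class $[t]_s$ as the weak-open neighbourhood where you use the union $U_{\succ s}$ of all away-pointing directions. For strictness, your $U(\vec v_0)=\{q\in\mathcal T:p\not\preceq q\}$ is exactly the paper's subbasic set $[t_0]_p$ (root-direction class at a non-root point), rejected for the same reason: it is not an up-set.
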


For each point $t\in\mathcal T$, we can define an order $\preceq_t$ on $\mathcal T$, such that $(\mathcal T,\preceq)$ and $(\mathcal T,\preceq_t)$ have the same segments (see Definition \ref{WTTOP}) and under which $t$ is a root of $\mathcal T$. Theorem \ref{COMPTOP} is a consequence of the following stronger result:

\begin{Teo}\label{mainthm}
For each $t\in\mathcal T$, consider the Scott topology $\mathfrak S_t$ on $\mathcal T$ associated to $\preceq_t$. Then the weak tree topology on $\mathcal T$ is the topology generated by $\displaystyle\bigcup_{t\in \mathcal T}\mathfrak S_t$.
\end{Teo}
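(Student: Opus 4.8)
The plan is to prove the two inclusions between the weak tree topology, which I will denote $\mathfrak W$, and the topology $\mathfrak G:=\big\langle \bigcup_{t\in\mathcal T}\mathfrak S_t\big\rangle$ generated by the union of the Scott topologies. Two preliminary remarks organize everything. First, since $(\mathcal T,\preceq_t)$ is itself a rooted non-metric tree having the same segments as $(\mathcal T,\preceq)$, and since the weak tree topology is defined purely in terms of segments (Definition~\ref{WTTOP}), the weak tree topology attached to $\preceq_t$ coincides with $\mathfrak W$ for every $t\in\mathcal T$. Second, Theorem~\ref{COMPTOP}, applied to the rooted non-metric tree $(\mathcal T,\preceq_t)$, yields $\mathfrak S_t\subseteq\mathfrak W$ for every $t$. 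Consequently $\mathfrak G\subseteq\mathfrak W$, which disposes of the easy inclusion.

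For the reverse inclusion $\mathfrak W\subseteq\mathfrak G$ it suffices to show that each member of a subbasis of $\mathfrak W$ already lies in some $\mathfrak S_t$. I would use the subbasis of the weak tree topology furnished by Definition~\ref{WTTOP}, whose members are the connected components of the sets $\mathcal T\setminus\{t\}$ as $t$ ranges over $\mathcal T$. Fix one such component $C$, a component of $\mathcal T\setminus\{t\}$. The crucial observation is that re-rooting at $t$ turns $C$ into an \emph{upward} object: under $\preceq_t$ the point $t$ is the root, so $C$ is exactly one of the branches emanating from $t$, namely
$$ C=\{\sigma\in\mathcal T : \sigma\succ_t t \text{ and } \sigma \text{ lies in the tangent direction } \vec v \text{ of } C\}. $$
This step automatically treats the ``downward'' component relative to $\preceq$ (the one containing the original root) on the same footing as the others: after re-rooting at $t$ it is simply another branch above $t$, and the same holds for any coarser ``one-sided'' subbasic set, which becomes a union of such branches.

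The heart of the argument, and the step I expect to require the most care, is verifying that such a branch $C$ is Scott-open for $\preceq_t$. It is clearly a $\preceq_t$-upper set. For the inaccessibility condition I would exploit that in a tree any two elements with a common upper bound are comparable, so that every $\preceq_t$-directed set is in fact a $\preceq_t$-chain. Let $D$ be a $\preceq_t$-directed set whose supremum $\tau$ lies in $C$. Since $t$ is the $\preceq_t$-minimum and $\tau\succ_t t$, the chain $D$ cannot consist solely of $t$, so it contains some $d\succ_t t$; as $d\preceq_t\tau$, the element $d$ lies on the segment from $t$ to $\tau$ and therefore shares the tangent direction $\vec v$, giving $d\in C$. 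Hence $D\cap C\neq\emptyset$, so $C\in\mathfrak S_t$.

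Putting the pieces together, every subbasic open set of $\mathfrak W$ belongs to $\mathfrak S_t$ for a suitable $t$, hence to $\mathfrak G$, so $\mathfrak W\subseteq\mathfrak G$; combined with $\mathfrak G\subseteq\mathfrak W$ this gives $\mathfrak W=\mathfrak G$. The only delicate points beyond the comparability-chain fact are confirming that the chosen subbasis of $\mathfrak W$ really is the family of components of the sets $\mathcal T\setminus\{t\}$ (so that re-rooting applies cleanly), and checking that the tangent direction of the supremum $\tau$ is inherited by the cofinal elements of $D$ — both of which rest on the essentially one-dimensional nature of the segments of a non-metric tree.
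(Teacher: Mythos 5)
Your proposal is correct and follows essentially the same route as the paper: the inclusion $\mathfrak G\subseteq\mathfrak W$ comes from Proposition \ref{Proposcwtt} applied to each re-rooted tree $(\mathcal T,\preceq_t)$, and the reverse inclusion from showing that each subbasic set $[a]_t$ is a $\preceq_t$-upper set inaccessible by directed joins, hence belongs to $\mathfrak S_t$. The differences are only in packaging: what you dismiss as ``clearly'' an upper set is the paper's Lemma \ref{upperset} (whose proof rests on the segment inclusion of Lemma \ref{lemaoninterv}, equivalently on the transitivity of $\sim_t$), and your inaccessibility argument treats only the rooted case via the observation that directed sets are chains, whereas the paper's Lemma \ref{inaccdirjoin} proves the statement for arbitrary $t$ with a two-case analysis --- your specialization is all the theorem actually needs.
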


\section{The valuative tree}

\begin{Def}\label{ROTNONTR}
A \index{Rooted non-metric tree}\textbf{rooted non-metric tree} is a partially ordered set $(\mathcal{T},\preceq)$ such that:
\begin{description}
\item[(T1)] There exists a (unique) smallest element $t_0\in\mathcal{T}$.
\item[(T2)] Every set of the form $I_t=\{a\in\mathcal{T}\mid a\preceq t\}$ is isomorphic (as ordered sets) to a real interval.
\item[(T3)] Every totally ordered convex subset of $\mathcal{T}$ is isomorphic to a real interval.
\item[(T4)] Every non-empty subset $\mathcal{S}$ of $\mathcal{T}$ admits an infimum in $\mathcal{T}$.
\end{description}
\end{Def}

In \cite{JNVT}, we prove the following:

\begin{Lema}
Under conditions \mbox{\rm \textbf{(T1)}} and \mbox{\rm \textbf{(T2)}}, the following conditions are equivalent:
\begin{description}
\item[(T4)] Every non-empty subset $\mathcal S\subseteq \mathcal{T}$ admits an infimum.
\item[(T4')] Given two elements $a,b\in\mathcal{T}$, the set $\{a,b\}$ admits an infimum $a\wedge b$.
\end{description}
\end{Lema}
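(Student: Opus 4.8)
The forward implication $\mathbf{(T4)}\Rightarrow\mathbf{(T4')}$ is immediate, since $\{a,b\}$ is a particular non-empty subset of $\mathcal T$. The plan is therefore to concentrate on $\mathbf{(T4')}\Rightarrow\mathbf{(T4)}$, and the guiding idea is to reduce the infimum of an arbitrary non-empty set to an infimum computed inside a single interval $I_t$, where completeness is available essentially for free.

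First I would fix an arbitrary element $a_0$ of the given non-empty set $\mathcal S$. By $\mathbf{(T4')}$ the binary infimum $a_0\wedge a$ exists for every $a\in\mathcal S$, and since $a_0\wedge a\preceq a_0$ each of these elements lies in $I_{a_0}$. The key structural observation is that $I_{a_0}$ is a complete lattice: by $\mathbf{(T1)}$ the root $t_0$ satisfies $t_0\preceq a_0$, so $t_0$ is the least element of $I_{a_0}$, while $a_0$ is its greatest element, and hence by $\mathbf{(T2)}$ the set $I_{a_0}$ is isomorphic to a closed bounded real interval, in which every subset admits an infimum. Here I would emphasize that $\mathbf{(T1)}$ is genuinely needed, since without a least element $I_{a_0}$ could be a half-open interval and would fail to be complete. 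I then set $m:=\inf\{a_0\wedge a\mid a\in\mathcal S\}$, the infimum being taken inside $I_{a_0}$, which exists by the previous remark.

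It remains to verify that $m$ is in fact the infimum of $\mathcal S$ in all of $\mathcal T$, and for this I would check two points. First, $m$ is a lower bound of $\mathcal S$: for each $a\in\mathcal S$ we have $m\preceq a_0\wedge a\preceq a$. Second, $m$ is the greatest lower bound: if $\ell$ is any lower bound of $\mathcal S$, then $\ell\preceq a_0$ places $\ell$ in $I_{a_0}$, and since $\ell$ is a common lower bound of $a_0$ and $a$ we obtain $\ell\preceq a_0\wedge a$ for every $a\in\mathcal S$; thus $\ell$ is a lower bound inside $I_{a_0}$ of the family whose infimum is $m$, whence $\ell\preceq m$. This establishes $m=\inf\mathcal S$ and completes the argument.

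The step I expect to be the main obstacle, and the one I would treat most carefully, is the reduction to $I_{a_0}$: one must observe that replacing each $a$ by $a_0\wedge a$ does not alter the relevant set of lower bounds of $\mathcal S$ (precisely the verification carried out in the last paragraph) and costs nothing, because these meets exist by $\mathbf{(T4')}$ and land in the complete lattice $I_{a_0}$. Once completeness of $I_{a_0}$ is in hand, the remainder of the proof is routine.
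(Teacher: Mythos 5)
Your argument is correct. Note, however, that this paper does not actually prove the lemma: it is stated as a result imported from \cite{JNVT}, so there is no in-paper proof to compare yours against. Your route --- replacing $\mathcal S$ by $\{a_0\wedge a\mid a\in\mathcal S\}\subseteq I_{a_0}$, observing that \textbf{(T1)} and \textbf{(T2)} together force $I_{a_0}$ to be order-isomorphic to a closed bounded real interval (it has least element $t_0$ and greatest element $a_0$, and an interval with both a minimum and a maximum is closed and bounded, hence a complete lattice), and then checking that the infimum $m$ computed there is the infimum of $\mathcal S$ in all of $\mathcal T$ --- is the natural argument, and every step checks out: $m\preceq a_0\wedge a\preceq a$ gives the lower-bound property, and any lower bound $\ell$ of $\mathcal S$ lies in $I_{a_0}$ (since $a_0\in\mathcal S$ forces $\ell\preceq a_0$) and satisfies $\ell\preceq a_0\wedge a$ for all $a\in\mathcal S$, whence $\ell\preceq m$. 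Your side remark that \textbf{(T1)} is indispensable is also well taken: $\mathcal T=\left]0,1\right]$ with the usual order satisfies \textbf{(T2)} and \textbf{(T4')} but not \textbf{(T4)}.
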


\begin{Obs}
The lemma above shows that if a partially ordered set for which conditions \mbox{\rm \textbf{(T1)}} and \mbox{\rm \textbf{(T2)}} hold is directed (with respect to reverse set inclusion), then its order is a directed complete partial order (with respect to reverse set inclusion).
\end{Obs}

We will now define some properties associated to a rooted non-metric tree.
\begin{Def}\label{WTTOP}
\begin{description}

\item[(i)] Given a non-empty subset $\mathcal{S}\subseteq\mathcal{T}$ we define the \index{Join}\textbf{join} $\displaystyle\bigwedge_{s\in\mathcal{S}}s$ of $\mathcal{S}$ to be the infimum of $\mathcal{S}$.

\item[(ii)] Given two elements $a,b\in\mathcal{T}$ we define the \index{Closed segment}\textbf{closed segment} connecting them by
\[
\left[a,b\right]:=\{c\in\mathcal{T}\mid (a\wedge b\preceq c\preceq a)\vee (a\wedge b\preceq c\preceq b)\}.
\]
We define $\left]a,b\right]$ and $\left[a,b\right[$ similarly.

\item[(iii)] For $t\in\mathcal{T}$ we define an equivalence relation on $\mathcal{T}\backslash \{t\}$ by setting
\[
a\sim_{t}b\Llr t\notin [a,b].
\]
For an element $a\in \mathcal{T}\backslash \{t\}$ its equivalence class will be denoted by $[a]_{t}$, i.e., $[a]_t=\{b\in \mathcal T\mid a\sim_t b\}$. Denote by $\mathcal T_t$ to the set $\{[a]_t\mid a\in\mathcal T\}$ of all equivalence classes under $\sim_t$

\item[(iv)] The \index{Topology!weak tree}\textbf{weak tree topology} on $\mathcal{T}$ is the topology generated by all the sets of the form $[a]_{t}$ for $a$ and $t$ running through $\mathcal T$.

\item[(v)] A \index{Parametrization}\textbf{parametrization} of a rooted non-metric tree is an increasing (or decreasing) mapping $\Psi:\mathcal{T}\lra\left[-\infty,\infty\right]$ such that its restriction to every totally ordered convex subset of $\mathcal{T}$ is an isomorphism (of ordered sets) onto a real interval.

\item[(vi)] Given an increasing parametrization $\Psi:\mathcal{T}\lra\left[1,\infty\right]$ we define a metric on $\mathcal{T}$ by setting
\[
d_{\Psi}(a,b)=\displaystyle\left(\frac{1}{\Psi(a\wedge b)}-\frac{1}{\Psi(a)}\right)+\left(\frac{1}{\Psi(a\wedge b)}-\frac{1}{\Psi(b)}\right).
\]
\end{description}
\end{Def}

In \cite{JNVT}, we prove the following two results:

\begin{Teo}\label{Comp}
Let $(\mathcal T,\preceq)$ be a rooted non-metric tree and let $\Psi:\mathcal T\lra [1,\infty]$ be a parametrization of $\mathcal T$. Then the weak tree topology on $\mathcal{T}$ is coarser than or equal to the topology associated with the metric $d_\Psi$.
\end{Teo}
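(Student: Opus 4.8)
The plan is to show that every member of a subbasis of the weak tree topology is open for the metric $d_\Psi$; since the weak tree topology is generated by the classes $[a]_t$ (Definition~\ref{WTTOP}) and a topology generated by a subbasis is the coarsest one containing it, this suffices. So I fix $t,a\in\mathcal T$ and a point $x\in[a]_t$, and I look for a radius $\epsilon>0$ with $B_{d_\Psi}(x,\epsilon)\subseteq[a]_t$. Since $x\in[a]_t$ we have $[a]_t=[x]_t$, and a point $y$ fails to lie in $[x]_t$ precisely when $t\in[x,y]$; thus it is enough to bound $d_\Psi(x,y)$ away from $0$ over all $y$ with $t\in[x,y]$. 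The guiding idea is that $d_\Psi$ behaves like a path metric on the tree, so that $t$ lying on the segment $[x,y]$ should force $d_\Psi(x,y)=d_\Psi(x,t)+d_\Psi(t,y)\ge d_\Psi(x,t)$; the choice $\epsilon:=d_\Psi(x,t)$ will then work.

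To make this precise I write $\rho:=1/\Psi$, so that $\rho$ is strictly decreasing along every chain and $d_\Psi(u,v)=2\rho(u\wedge v)-\rho(u)-\rho(v)$. Assume $t\in[x,y]$. By the definition of the segment, $t$ lies on one of the two arms $[x\wedge y,x]$ or $[x\wedge y,y]$, and I treat the two cases. If $t\in[x\wedge y,y]$ then $x\wedge y\preceq t\preceq y$, and combining $t\preceq y$ (which gives $x\wedge t\preceq x\wedge y$) with $x\wedge y\preceq t$ (which gives $x\wedge y\preceq x\wedge t$) yields $x\wedge y=x\wedge t$; substituting into the formula gives $d_\Psi(x,y)=2\rho(x\wedge t)-\rho(x)-\rho(y)$, which is $\ge 2\rho(x\wedge t)-\rho(x)-\rho(t)=d_\Psi(x,t)$ because $\rho(y)\le\rho(t)$. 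If instead $t\in[x\wedge y,x]$ then $t\preceq x$, so $x\wedge t=t$ and $d_\Psi(x,t)=\rho(t)-\rho(x)$; here $x\wedge y\preceq t$ and $x\wedge y\preceq y$ give $2\rho(x\wedge y)\ge\rho(t)+\rho(y)$, whence $d_\Psi(x,y)=2\rho(x\wedge y)-\rho(x)-\rho(y)\ge\rho(t)-\rho(x)=d_\Psi(x,t)$. In both cases $d_\Psi(x,y)\ge d_\Psi(x,t)$, as predicted by the path-metric heuristic.

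It remains to observe that $\epsilon:=d_\Psi(x,t)$ is strictly positive: as $x\neq t$ we have $x\wedge t\prec x$ or $x\wedge t\prec t$, and strict monotonicity of $\rho$ on the relevant chain makes at least one of the nonnegative terms $\rho(x\wedge t)-\rho(x)$, $\rho(x\wedge t)-\rho(t)$ positive. Consequently $d_\Psi(x,y)<\epsilon$ forces $t\notin[x,y]$, i.e.\ $y\in[x]_t=[a]_t$, so $B_{d_\Psi}(x,\epsilon)\subseteq[a]_t$ and $[a]_t$ is $d_\Psi$-open. I expect the only genuine work to be the two meet computations establishing $d_\Psi(x,y)\ge d_\Psi(x,t)$, the slightly delicate point being the identity $x\wedge y=x\wedge t$ extracted from the hypothesis $t\in[x,y]$; the passage from ``$y$ leaves the class $[x]_t$'' to ``$t\in[x,y]$'' and the positivity of $d_\Psi(x,t)$ are then immediate from the definitions.
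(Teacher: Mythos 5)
Your argument is correct, but there is nothing in this paper to compare it against: Theorem \ref{Comp} is stated here without proof, being quoted from \cite{JNVT} (``In \cite{JNVT}, we prove the following two results''), so your proposal can only be judged on its own merits. Judged so, it is sound and essentially complete. The reduction to subbasic sets $[a]_t$ is the right move, and your characterization ``$y\notin[x]_t$ if and only if $t\in[x,y]$'' is exactly Definition \ref{WTTOP}(iii); it even covers the point $y=t$, since $t\in[x,t]$. The two meet computations are correct: if $x\wedge y\preceq t\preceq y$ then $x\wedge t$ and $x\wedge y$ are lower bounds of each other's defining pairs, giving $x\wedge y=x\wedge t$ and hence $d_\Psi(x,y)\ge d_\Psi(x,t)$ via $\rho(y)\le\rho(t)$; if $x\wedge y\preceq t\preceq x$ then $2\rho(x\wedge y)\ge\rho(t)+\rho(y)$ gives the same inequality. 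For the positivity of $\epsilon=d_\Psi(x,t)$, the precise justification is that $\Psi$ restricted to the chain $[x\wedge t,x]$ (or $[x\wedge t,t]$) is an order isomorphism onto a real interval, hence strictly increasing there; this also rules out the degenerate case $\Psi(x\wedge t)=\Psi(x)=\infty$, where ``strict monotonicity of $\rho$'' would otherwise be in doubt. The only hypothesis you use silently is that $\Psi$ is increasing, which is legitimate since Definition \ref{WTTOP}(vi) defines $d_\Psi$ only for increasing parametrizations. Structurally, your proof parallels what the paper does elsewhere (e.g.\ Proposition \ref{Proposcwtt}): show each subbasic weak-tree open set is open in the finer topology by producing, around each of its points, a neighborhood in that topology -- here a metric ball of radius $d_\Psi(x,t)$ rather than a set $[a]_{t'}$.
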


\begin{Teo}\label{Thm_1}
If there is an element $t\in\mathcal{T}$ with uncountably many branches (i.e., if $|\mathcal{T}_{t}|>|\N|$), then the weak tree topology is not first countable. In particular, the metric topology given by any parametrization is strictly coarser than the weak tree topology.
\end{Teo}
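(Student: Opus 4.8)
\section*{Proof proposal}

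The plan is to prove directly that the weak tree topology admits no countable neighborhood basis at the point $t$ with $|\mathcal T_t|>|\N|$, and then to deduce the comparison with metric topologies from the first countability of metric spaces together with Theorem \ref{Comp}. Throughout I work with the upward branches at $t$: since at most one equivalence class under $\sim_t$ points downward (the class of all $b$ with $t\not\preceq b$), the hypothesis $|\mathcal T_t|>|\N|$ forces uncountably many branches consisting of points $\succ t$.

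First I would describe the basic neighborhoods of $t$. As the sets $[a]_s$ form a subbasis, a basic open neighborhood of $t$ is a finite intersection $\bigcap_{i=1}^{k}[a_i]_{s_i}$ with $t$ in every factor; since $t\in[a_i]_{s_i}$ forces $[a_i]_{s_i}=[t]_{s_i}$, every basic neighborhood of $t$ has the shape $B=\bigcap_{i=1}^{k}[t]_{s_i}$. The geometric heart of the argument is to read off a single factor $[t]_s$ in terms of the branches at $t$. Using $b\in[t]_s\Llr s\notin[t,b]$ and the description of the segment $[t,b]$ through the meet $t\wedge b$, I would show: if $s\succ t$ lies on a branch $v$, then $[t]_s$ contains every branch other than $v$ in its entirety and on $v$ omits exactly the cone $\{b\mid b\succeq s\}$; and if $s\not\succ t$, then $[t]_s$ contains every upward branch in its entirety. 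The consequence I want is that each factor $[t]_{s_i}$, and hence $B$, pinches only finitely many branches, always at a level strictly above $t$, and contains the full initial segment of every remaining branch.

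Next comes a cardinality (diagonal) argument. Suppose for contradiction that $\{U_n\}_{n\in\N}$ is a countable neighborhood basis at $t$. Each $U_n$ contains a basic neighborhood $B_n=\bigcap_i[t]_{s_{i,n}}$, and the collection $\{s_{i,n}\}$ is countable, so only countably many branches contain some $s_{i,n}$. Because $\mathcal T_t$ is uncountable, I may pick an upward branch $v^*$ containing none of the $s_{i,n}$; by the structural description each $B_n$ (hence each $U_n$) then contains $v^*$ in its entirety, and in particular contains any chosen point $s^*\succ t$ on $v^*$ (such a point exists, and by conditions \textbf{(T2)} and \textbf{(T3)} the segment $[t,s^*]$ is a genuine real interval). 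Now $W:=[t]_{s^*}$ is an open neighborhood of $t$ (we have $t\in[t]_{s^*}$ since $t\neq s^*$) from which $s^*$ is absent, as $[t]_{s^*}\subseteq\mathcal T\setminus\{s^*\}$. Since each $U_n$ contains $s^*$ while $s^*\notin W$, no $U_n$ is contained in $W$, contradicting the basis property. Hence $t$ has no countable neighborhood basis and the weak tree topology is not first countable.

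Finally, for the metric comparison: every metric topology is first countable, so by the previous step the weak tree topology cannot coincide with the topology of any $d_\Psi$. Combined with Theorem \ref{Comp}, which gives that the weak tree topology is coarser than each $d_\Psi$-topology, the containment is strict, so the metric topology is strictly finer than (equivalently, the weak tree topology is strictly coarser than) the metric topology associated with any parametrization. The main obstacle is the structural lemma on the shape of $[t]_s$: once it is established that a neighborhood of $t$ can pinch only finitely many branches and only above $t$, so that it must contain almost every branch in full, both the diagonal argument and the metric comparison follow quickly.
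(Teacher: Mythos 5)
A preliminary remark: this paper does not actually prove Theorem \ref{Thm_1} --- it is quoted from \cite{JNVT} as a known result --- so there is no in-paper proof to compare yours against, and I can only assess your argument on its own terms. On those terms it is correct and complete in outline. The reduction of basic neighborhoods of $t$ to finite intersections $\bigcap_{i=1}^{k}[t]_{s_i}$ is right, and your structural lemma checks out by routine verification: for $b\succ t$ and $s\succ t$ one has $b\in[t]_s\Llr s\not\preceq b$; two points above $t$ are $\sim_t$-equivalent exactly when their meet lies strictly above $t$ (since for $b,b'\succ t$ one has $t\in[b,b']\Llr b\wedge b'=t$); and the points $b$ with $t\not\preceq b$ form at most one class, which moreover lies inside $[t]_s$ for every $s\succ t$. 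This also justifies your tacit use of the fact that every point of an upward branch is $\succ t$, so the choice of $s^*\succ t$ on $v^*$ is automatic. Consequently each basic neighborhood of $t$ pinches only finitely many of the uncountably many upward branches, and the diagonal argument with $v^*$, $s^*$ and $W=[t]_{s^*}$ (where $t\in W$, $s^*\notin W$, but $s^*\in U_n$ for all $n$) goes through without gaps.

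One point you should repair, though it concerns the statement rather than your mathematics. As printed, the ``in particular'' clause asserts that the metric topology is strictly \emph{coarser} than the weak tree topology; this is incompatible with Theorem \ref{Comp}, which makes the weak tree topology coarser than or equal to every $d_\Psi$-topology, so the clause must be a misprint for ``strictly finer'' (equivalently: the weak tree topology is strictly coarser than each $d_\Psi$-topology). That corrected direction is exactly what your first-countability argument plus Theorem \ref{Comp} yields, and you clearly intend it, but your closing sentence is garbled --- it currently reads ``the metric topology is strictly finer than \dots\ the metric topology''; it should say that the $d_\Psi$-topology is strictly finer than the weak tree topology. With that sentence fixed, and with the one-line verifications of the structural lemma written out, your proposal is a sound, self-contained proof.
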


We fix an element $t\in \mathcal T$ and define a relation $\preceq_t$ on $\mathcal T$ as follows. For any two elements $a,b\in \mathcal T$ we declare that $a\preceq_t b$ if and only if $a\in [t,b]$. It is straightforward to prove that $\preceq_t$ is an order on $\mathcal T$ and that $(\mathcal T,\preceq_t)$ is a rooted non-metric tree (for which $t$ is a root). Moreover, the segments under this new order are exactly the same as those defined by the order $\preceq$. Consequently, the weak tree topology on $\mathcal T$ defined by these two orders is the same.

The following lemma will be used in the proof of the main theorem.
\begin{Lema}\label{lemaoninterv}
Take elements $a,b,c$ in a rooted non-metric tree $\mathcal T$. Then we have $[a,c]\subseteq [a,b]\cup[b,c]$.
\end{Lema}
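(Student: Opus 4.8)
The plan is to reduce everything to the total orders on the initial segments $I_t=\{a\in\mathcal T\mid a\preceq t\}$, which are real intervals by \textbf{(T2)}. The first step is to record the identity $I_a\cap I_b=I_{a\wedge b}$: the inclusion $\supseteq$ is immediate from $a\wedge b\preceq a,b$, while any common lower bound of $a$ and $b$ lies below $a\wedge b$ by the definition of infimum. This lets me rewrite every closed segment as a union of two \emph{chains}. Writing $\langle y,z\rangle:=\{x\in\mathcal T\mid y\preceq x\preceq z\}$ for $y\preceq z$ (a subset of the totally ordered set $I_z$), I have $[a,c]=\langle a\wedge c,a\rangle\cup\langle a\wedge c,c\rangle$, and likewise for $[a,b]$ and $[b,c]$.

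Next I would fix $x\in[a,c]$ and, using the symmetry $a\leftrightarrow c$ (which fixes the right-hand side, since $[a,b]\cup[b,c]=[c,b]\cup[b,a]$), assume without loss of generality that $a\wedge c\preceq x\preceq a$. Since $x$ and $a\wedge b$ both lie in the chain $I_a$, they are comparable. If $a\wedge b\preceq x$, then $a\wedge b\preceq x\preceq a$ places $x$ in $\langle a\wedge b,a\rangle\subseteq[a,b]$ and we are done. The remaining case is $x\prec a\wedge b$, which in particular forces $a\wedge c\prec a\wedge b$.

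The heart of the argument, and the step I expect to be the main obstacle, is a tree (ultrametric-type) relation among the three pairwise meets: I claim that $a\wedge c\prec a\wedge b$ implies $b\wedge c=a\wedge c$. To prove this I would first note that $a\wedge b$, $b\wedge c$, $a\wedge c$ are pairwise comparable, since any two of them lie together in one of the chains $I_a$, $I_b$, $I_c$; hence they form a chain. From $a\wedge c\prec a\wedge b\preceq b$ I get $a\wedge c\preceq b$ and $a\wedge c\preceq c$, so $a\wedge c\preceq b\wedge c$; the reverse inequality I would obtain by ruling out $a\wedge c\prec b\wedge c$, which, on comparing $a\wedge b$ and $b\wedge c$ inside the chain $I_b$, leads in every case to a contradiction with $a\wedge c\prec a\wedge b$. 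Granting the claim, the hard case closes quickly: $b\wedge c=a\wedge c\preceq x$, and $x\prec a\wedge b\preceq b$ gives $x\preceq b$, so $x\in\langle b\wedge c,b\rangle\subseteq[b,c]$. The symmetric case $a\wedge c\preceq x\preceq c$ follows by interchanging $a$ and $c$, completing the proof.
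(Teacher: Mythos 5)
Your proof is correct and follows essentially the same route as the paper's: both decompose $[a,c]$ into the two chains emanating from $a\wedge c$, exploit comparability of the pairwise meets inside the chains $I_a$ and $I_b$, and settle the hard case by deriving the identity $b\wedge c=a\wedge c$. The only difference is organizational: you argue element-wise and isolate that identity as a standalone claim, whereas the paper splits the chain $[a\wedge c,a]$ at $a\wedge b$ and places the lower piece in $[b,c]$ and the upper piece in $[a,b]$ --- the same dichotomy, phrased interval-wise.
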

\begin{proof}
We have to prove that both segments $[a\wedge c,a]$ and $[a\wedge c, c]$ are subsets of $[a,b]\cup [b,c]$. 

Consider the segment $I_b$, which is totally ordered by property \textbf{(T2)}. Since $b\wedge c\preceq b$ and $a\wedge b\preceq b$, these two elements are comparable. If $a\wedge b\preceq b\wedge c$, then $a\wedge b\preceq c$. Hence, $a\wedge b\preceq a\wedge c$, which implies that $[a\wedge c, a]\subseteq [a\wedge b, a]\subseteq [a,b]$.

If $b\wedge c\preceq a\wedge b$, then $b\wedge c\preceq a$ and consequently $b\wedge c\preceq a\wedge c$. On the other hand, since $a\wedge b\preceq a$ and $a\wedge c\preceq a$ we have that $a\wedge b\preceq a\wedge c$ or $a\wedge c\preceq a\wedge b$. In the first case we reason like in the previous paragraph to obtain $[a\wedge c, a]\subseteq [a,b]$. If $a\wedge c\preceq a\wedge b$, then $a\wedge c\preceq b$ and consequently $a\wedge c\preceq b\wedge c$. Hence, $a\wedge c=b\wedge c$ and thus
\[
[a\wedge c,a\wedge b]=[b\wedge c,a\wedge b]\subseteq [b\wedge c,b]\subseteq [b,c].
\]
Therefore
\[
[a\wedge c, a]= [a\wedge c,a\wedge b]\cup [a\wedge b,a]\subseteq [b, c]\cup [a,b].
\]

The proof that $[a\wedge c,c]\subseteq [a,b]\cup [b,c]$ is analogous.
\end{proof}

\section{The Scott topology}

Consider a partially ordered set $(\mathcal P,\preceq)$. A subset $\mathcal S$ of $\mathcal P$ is said to be an \textbf{upper set} if for every elements $x,y\in \mathcal P$, if $x\in \mathcal S$ and $y\geq x$, then $y\in \mathcal S$. The set $\mathcal S$ is said to be \textbf{inaccessible by directed joints} if for every directed subset $\mathcal D$ of $\mathcal P$, if $\sup \mathcal D\in \mathcal S$, then $\mathcal D\cap \mathcal S\neq\emptyset$.
\begin{Def}\label{SCTOP}
The Scott topology $\mathfrak S$ on $\mathcal P$ is defined by setting as open sets the upper sets which are inaccessible by directed joints.
\end{Def}

\begin{Prop}\label{Proposcwtt}
Let $(\mathcal T, \preceq)$ be a rooted non-metric tree. Then every Scott open set $\VR$ of $\mathcal T$ is open in the weak tree toplogy.
\end{Prop}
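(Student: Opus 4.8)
The plan is to show that every point of $\VR$ is an interior point of $\VR$ in the weak tree topology; since a set is open precisely when it is a neighbourhood of each of its points, this suffices. So I fix $t\in\VR$ and aim to produce one of the generators $[t]_s$ of the weak tree topology (the $\sim_s$-class of $t$) that contains $t$ and is contained in $\VR$. The only freedom I have is the base point $s$, and the idea is to choose $s\prec t$ close enough to $t$ so that $[t]_s$ ``pinches in'' below $t$ while staying inside the upper set $\VR$.

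The crucial preliminary is an explicit description of $[t]_s$ for $s\prec t$. Unwinding the definition of the segment $[t,b]$, a short case analysis shows that for $s\prec t$ one has $s\in[t,b]$ if and only if $t\wedge b\preceq s$; since $t\wedge b$ and $s$ both lie in the totally ordered set $I_t$ by \textbf{(T2)}, this yields
\[
[t]_s=\{b\in\mathcal T\mid s\prec t\wedge b\}.
\]
In particular $t\in[t]_s$ (because $t\wedge t=t\succ s$), and more importantly every $b\in[t]_s$ satisfies $s\prec t\wedge b\preceq b$, hence $s\prec b$. This last observation is exactly what will let me invoke that $\VR$ is an upper set.

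Next I carry out the main step. If $t=t_0$, then $\VR$ is an upper set containing the least element, so $\VR=\mathcal T$ and there is nothing to prove. Otherwise I consider $\mathcal D=\{s\in\mathcal T\mid s\prec t\}$. By \textbf{(T2)} the set $I_t$ is totally ordered, so $\mathcal D$ is a chain and hence directed, and it is non-empty since $t_0\in\mathcal D$. The key claim is that $\sup\mathcal D=t$ in $\mathcal T$. That $t$ is an upper bound is immediate; for minimality I take any upper bound $u$, set $m=t\wedge u$, and assume for contradiction that $m\prec t$. Then \textbf{(T2)} furnishes some $s$ with $m\prec s\prec t$, so $s\in\mathcal D$ gives $s\preceq u$, while $s\preceq t$ holds as well; by the infimum property \textbf{(T4)} this forces $s\preceq t\wedge u=m$, contradicting $m\prec s$. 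Hence $m=t$, i.e.\ $t\preceq u$, and so $\sup\mathcal D=t\in\VR$. Because $\VR$ is inaccessible by directed joins, this produces some $s_0\in\mathcal D\cap\VR$, that is, an element $s_0\prec t$ with $s_0\in\VR$.

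Finally I conclude: by the formula above, every $b\in[t]_{s_0}$ satisfies $s_0\prec b$, so $b\in\VR$ since $\VR$ is an upper set containing $s_0$; thus $[t]_{s_0}\subseteq\VR$. As $[t]_{s_0}$ is one of the generators of the weak tree topology and contains $t$, the point $t$ is interior to $\VR$, which finishes the argument. I expect the hard part to be the claim $\sup\mathcal D=t$: upper boundedness is trivial, but minimality is the one place where both \textbf{(T2)} and \textbf{(T4)} are genuinely needed, and getting the infimum inequality $s\preceq t\wedge u$ to collide with the interval density is the heart of the matter. Everything else — the description of $[t]_s$ and the upper-set closure step — is routine once the segment definition is unwound.
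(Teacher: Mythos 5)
Your proof is correct and follows essentially the same route as the paper: consider the directed chain $\mathcal D$ of points strictly below the given point, use Scott inaccessibility to find some $s_0\in\mathcal D\cap\VR$, and conclude $[t]_{s_0}\subseteq\VR$ from the upper-set property (your explicit formula $[t]_{s_0}=\{b\mid s_0\prec t\wedge b\}$ is just a repackaging of the paper's contradiction argument that $b\in[a]_t$ forces $t\preceq b$). In fact you are somewhat more careful than the paper, which neither treats the root case $t=t_0$ (where $\mathcal D$ is empty) nor justifies the assertion $\sup\mathcal D=t$, both of which you handle explicitly.
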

\begin{proof}
For each point $a\in \VR$, we will prove that there exists $t\in\mathcal T$ such that $[a]_{t}\subseteq \VR$. Consider the set $\mathcal D:=\{t'\in \mathcal T\mid t'\preceq a\mbox{ and } t'\neq a\}$. By property \textbf{(T2)}, this set is order isomorphic to a real interval (thus a directed set). Hence, $\sup \mathcal D=a\in \VR$ and since $\VR$ is open in the Scott topology, we have that $\VR\cap \mathcal D\neq\emptyset$.

Take any $t\in \VR\cap \mathcal D$. For each $b\in [a]_{t}$, we will prove that $b\in \VR$. Suppose, towards a contradiction, that $t\not\preceq b$. Since $t\preceq a$ and $I_a$ is totally ordered we have $t\preceq a\wedge b$ or $a\wedge b \preceq t$. The first case cannot happen because $t\not\preceq b$ and $a\wedge b\preceq b$. Consequently, $a\wedge b \preceq t\preceq a$ and hence $t\in[a,b]$. This means that $b\notin [a]_t$, a contradiction. Hence, $t\preceq b$. Since $t\in \VR$ and $\VR$ is an upper set we obtain that $b\in \VR$. Therefore, $[a]_t\subseteq \VR$, which concludes our proof.
\end{proof}

\begin{proof}[Proof of Theorem \ref{COMPTOP}]
The previous proposition shows that the Scott topology is coarser than the weak tree topology. It remains to show that they are different.

Take $t\in\mathcal T$ such that $t$ is not the root of $(\mathcal T,\preceq)$. Consider the open subbasic set $[t_0]_t$ in the weak tree topology. Then $t_0\in [t_0]_t$, $t_0\preceq t$ but $t\notin [t_0]_t$ which implies that $[t_0]_t$ is not an upper set. Hence, $[t_0]_t$ is not open in the Scott topology.
\end{proof}

\begin{Obs}
There are many ways to see that the Scott topology is not the weak tree topology. For instance, the weak tree topology is ways Hausdorff, but the Scott topology (of a rooted non-metric tree) is not. Also, if we consider the orders $\preceq_t$ and $\preceq_s$ on $\mathcal T$ for $t\neq s$, then the associated weak tree topologies are the same, but the Scott topologies $\mathfrak S_t$ and $\mathfrak S_s$ are not.
\end{Obs}

The next result will be useful to prove Theorem \ref{mainthm}.
\begin{Lema}\label{inaccdirjoin}
For the rooted non-metric tree $(\mathcal T,\preceq)$, every subasic open set in the weak tree topology is inaccessible by directed joints.
\end{Lema}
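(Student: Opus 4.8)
The plan is to unwind the definition of the subbasic open sets and to exploit the fact that, in a tree, directed sets are nothing but chains. Recall that the subbasic open sets of the weak tree topology are the classes $[a]_t$, and that by definition $b\in[a]_t$ if and only if $t\notin[a,b]$. So, fixing $a,t\in\mathcal T$, I must show that whenever $\mathcal D\subseteq\mathcal T$ is directed (with respect to $\preceq$), its supremum $s:=\sup\mathcal D$ exists, and $s\in[a]_t$, there is some $d\in\mathcal D$ with $d\in[a]_t$, that is, with $t\notin[a,d]$.

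First I would record the structural observation that every directed subset of $\mathcal T$ is totally ordered: if $d,d'\in\mathcal D$, directedness gives a common upper bound $c\in\mathcal D$, and then $d,d'\in I_c$, which is totally ordered by \textbf{(T2)}; hence $d$ and $d'$ are comparable. In particular $\mathcal D$ is a chain contained in $I_s$, and since $s\in[a]_t\subseteq\mathcal T\setminus\{t\}$ we have $s\neq t$.

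The heart of the argument is a reduction via Lemma \ref{lemaoninterv}. Applying it to the triple $a,s,d$ gives $[a,d]\subseteq[a,s]\cup[s,d]$. Since $s\in[a]_t$ we already know $t\notin[a,s]$, so it suffices to produce $d\in\mathcal D$ with $t\notin[s,d]$. As $d\preceq s$, the segment $[s,d]$ is just $\{c\mid d\preceq c\preceq s\}$, so $t\in[s,d]$ precisely when $d\preceq t\preceq s$. I would split into two cases. If $t\not\preceq s$, this can never hold, so any $d\in\mathcal D$ (the set being nonempty) already lies in $[a]_t$. If instead $t\preceq s$, then $t\prec s$ because $s\neq t$; here I would use that $s$ is the \emph{least} upper bound of $\mathcal D$: were $t$ an upper bound of $\mathcal D$ we would get $s\preceq t$, contradicting $t\prec s$, so some $d\in\mathcal D$ satisfies $d\not\preceq t$. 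For this $d$ the condition $d\preceq t\preceq s$ fails, whence $t\notin[s,d]$ and therefore $t\notin[a,d]$.

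I expect the main obstacle to be the bookkeeping in the second case: correctly invoking the least-upper-bound property to extract an element $d$ of the chain that is not below $t$, and making sure the symmetric segment $[s,d]$ is read off correctly from $d\preceq s$. Once the reduction to ``$t\notin[s,d]$'' via Lemma \ref{lemaoninterv} is in place, the remainder is the direct case analysis above.
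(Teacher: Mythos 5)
Your proof is correct, but it takes a genuinely different route from the paper's. The paper argues by contraposition: it assumes $\mathcal D\subseteq\mathcal T\setminus[a]_t$ and shows $b:=\sup\mathcal D\notin[a]_t$, splitting into the cases $t\not\preceq a$ and $t\preceq a$, working directly with meets --- comparing $a\wedge d$, $a\wedge b$ and $t$ inside the totally ordered sets $I_a$ and $I_b$ --- and, in the second case, sub-splitting on whether $a\wedge b\prec b$ or $a\wedge b=b$. You instead prove the implication directly and let Lemma \ref{lemaoninterv} do the heavy lifting: from $[a,d]\subseteq[a,s]\cup[s,d]$ and $t\notin[a,s]$ you reduce the whole problem to producing one $d\in\mathcal D$ with $t\notin[s,d]$, which (since $d\preceq s$ gives $[s,d]=\{c\mid d\preceq c\preceq s\}$) becomes a short case analysis on whether $t\preceq s$, with the least-upper-bound property supplying some $d\not\preceq t$ in the nontrivial case. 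Interestingly, the paper proves Lemma \ref{lemaoninterv} but deploys it only in Lemma \ref{upperset}; your argument shows it also streamlines the present lemma, hiding the meet computations inside it, simplifying the case split (on the position of $t$ relative to $\sup\mathcal D$ rather than relative to $a$), and absorbing the paper's separate treatment of the side case $\mathcal D=\{t\}$. The paper's version buys only self-containedness; yours is shorter and more modular. One small remark: your opening observation that directed subsets of $\mathcal T$ are chains is correct but never actually used --- all your argument needs is that $s$ is an upper bound of $\mathcal D$ (so $d\preceq s$) and that it is the least one.
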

\begin{proof}
Consider a subbasic open set $[a]_t$ of $\mathcal T$ and take a directed set $\mathcal D$ such that $\mathcal D\subseteq \mathcal T\setminus [a]_t$. We want to prove that $\sup \mathcal D=:b\notin [a]_t$. If $\mathcal D=\{t\}$, then $\sup \mathcal D=t\notin [a]_t$ and we are done. Hence, assume that there exists $d\in \mathcal D$ such that $d\neq t$.

Assume first that $t\not \preceq a$. We will show that $t\preceq \mathcal D$ (i.e., $t\preceq d$ for every $d\in\mathcal D$). Consequently, also $t\preceq b$ and thus $b\notin [a]_t$. For an element $d\in \mathcal D$, if $t\preceq a\wedge d$, then in particular $t\preceq a$, which is a contradiction. Hence, $t\not\preceq a\wedge d$. If $t\not\preceq d$, then $t\notin [d,a]$ which is a contradiction to $\mathcal D\subseteq \mathcal T\setminus [a]_t$. Hence, $t\preceq d$, which is what we wanted to prove.

Assume now that $t\preceq a$. If we prove that $a\wedge b\preceq t$, then $t\in [a,b]$ and this will conclude our proof. Since $t\preceq a$ and $a\wedge b\preceq a$, we have that $t\prec a\wedge b$ of $a\wedge b\preceq t$. Suppose, towards a contradiction, that $t\prec a\wedge b$. If $a\wedge b\prec b$, then there exists $d\in \mathcal D$ such that $a\wedge b \prec d\preceq b$. This implies that $a\wedge b=a\wedge d$ and consequently $t\notin [a,d]$. This is a contradiction to $d\notin [a]_t$. If $a\wedge b=b$, then $b\preceq a$. Since $b=\sup \mathcal D$, there exists $d\in D$ such that $t\prec d\preceq b\preceq a$ and consequently $t\notin [a,d]$, which is again a contradiction.
\end{proof}

\begin{Lema}\label{upperset}
If $t$ is the root of $\mathcal T$, then the set $[a]_t$ is an upper set for every $a\in \mathcal T\setminus \{t\}$.
\end{Lema}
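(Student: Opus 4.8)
The plan is to first describe the equivalence class $[a]_t$ explicitly in the special case where $t$ is the root, and then to combine this description with a monotonicity property of the infimum. Since $t=t_0$ is the smallest element, we have $t\preceq x$ for every $x\in\mathcal T$, and this collapses the segment condition considerably. Concretely, I would first check that for any $c\in\mathcal T\setminus\{t\}$ the statement $t\in[a,c]$ is equivalent to $a\wedge c=t$: unwinding the definition of the closed segment, $t\in[a,c]$ means $a\wedge c\preceq t\preceq a$ or $a\wedge c\preceq t\preceq c$, and because $t$ is the root both $t\preceq a$ and $t\preceq c$ hold automatically, so the condition reduces to $a\wedge c\preceq t$; together with the always-valid $t\preceq a\wedge c$ this forces $a\wedge c=t$. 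Dually, $c\in[a]_t$ if and only if $t\prec a\wedge c$.

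The second ingredient I would isolate is the monotonicity of the infimum: if $b\preceq c$, then $a\wedge b\preceq a\wedge c$. This is immediate from the universal property of the infimum guaranteed by \textbf{(T4')}, since $a\wedge b$ is a lower bound of $a$ and of $b\preceq c$, hence a lower bound of $\{a,c\}$, and therefore $a\wedge b\preceq a\wedge c$.

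With both facts available the verification is short. To show $[a]_t$ is an upper set, I would take $b\in[a]_t$ and an element $c\succeq b$, and aim to prove $c\in[a]_t$. From $b\in[a]_t$ and the first step we get $t\prec a\wedge b$; by monotonicity $a\wedge b\preceq a\wedge c$, so $t\prec a\wedge c$, and the characterization again yields $c\in[a]_t$. One should also record that $c\neq t$, which holds because $b\succ t$ (as $b\in\mathcal T\setminus\{t\}$ and $t$ is the root) and $c\succeq b$.

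The only genuinely delicate point, and the closest thing to an obstacle here, is the reduction of the segment membership $t\in[a,c]$ to the condition $a\wedge c=t$: this is where the hypothesis that $t$ is the root is essential, and it is exactly the step that would fail for a general $t$. Once the equivalence $c\in[a]_t\iff t\prec a\wedge c$ is established, the remainder of the argument is purely formal.
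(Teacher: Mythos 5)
Your proof is correct, but it takes a genuinely different route from the paper's. The paper argues entirely with segments: since $t$ is the root and $b\prec c$, the segment $[b,c]$ equals $\{d\in\mathcal T\mid b\preceq d\preceq c\}$ and hence cannot contain $t$; together with $t\notin[a,b]$ (which is just the definition of $b\in[a]_t$), the segment inclusion $[a,c]\subseteq[a,b]\cup[b,c]$ of Lemma \ref{lemaoninterv} yields $t\notin[a,c]$, i.e.\ $c\in[a]_t$. You instead prove the explicit root-specific characterization $c\in[a]_t\Llr t\prec a\wedge c$ and pair it with monotonicity of the meet, $b\preceq c\Rightarrow a\wedge b\preceq a\wedge c$; both ingredients are verified correctly, including the side condition $c\neq t$ and the (implicit) fact that $t\prec a\wedge b\preceq a\wedge c$ forces $t\prec a\wedge c$. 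What each approach buys: the paper's proof leans on Lemma \ref{lemaoninterv}, a fact about arbitrary triples and arbitrary $t$, which the paper sets up precisely for this purpose; your proof bypasses that lemma entirely, so it is more self-contained, and your description of the class at the root, $[a]_t=\{c\in\mathcal T\setminus\{t\}\mid a\wedge c\succ t\}$, is a clean reusable fact in its own right. The limitation you flag yourself --- that the reduction of $t\in[a,c]$ to $a\wedge c=t$ works only when $t$ is the root --- costs nothing here, since that is exactly the hypothesis of the lemma.
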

\begin{proof}
Take elements $b,c\in \mathcal T$ such that $b\in [a]_t$ and $b\prec c$. We will show that $c\in [a]_t$. Since $b\prec c$, we have $[b,c]=\{d\in \mathcal T\mid b\preceq d\preceq c\}$. Since $t$ is the smallest element of $\mathcal T$ we have that $t\notin [b,c]$. On the other hand, since $b\in [a]_t$, by the definition of $[a]_t$, we also have that $t\notin [a,b]$. Therefore, we use Lemma \ref{lemaoninterv} to conclude that $t\notin [a,c]\subseteq[a,b]\cup[b,c]$. Therefore, $c\in [a]_t$.
\end{proof}

We are now ready to prove our main result.

\begin{proof}[Proof of Theorem \ref{mainthm}]
As a consequence of Lemma \ref{upperset}, every set of the form $[a]_t$ is an upper set with respect to the order $\preceq_t$. Moreover, applying Lemma \ref{inaccdirjoin} to $(\mathcal T,\preceq_t)$ we obtain that this set is inaccessible by directed joints. Hence, $[a]_t\in \mathfrak S_t$.

On the other hand, by Proposition \ref{Proposcwtt}, every open set in the Scott topology $\mathcal S_t$ is open in the weak tree topology. Therefore, the weak tree topology is generated by $\displaystyle\bigcup_{t\in\mathcal T}\mathfrak S_t$.
\end{proof}


\begin{thebibliography}{99}

\bibitem{Fav_1} Favre C. and Jonsson M., The Valuative Tree. \textit{Lec. Notes in Math.}, \textbf{1853}, Springer-Verlag, 2004.

\bibitem{Gra} Granja A., \textit{The valuative tree of a two-dimensional regular local ring}, Math. Res. Lett., \textbf{14}, 2007, 19 -- 34.

\bibitem{JNVT} Novacoski J., Valuations centered at a two dimensional regular local ring: infima and topologies. \textit{Proceedings of the Second International Conference on Valuation Theory}, EMS Series of Congress Reports, European Mathematical Society, 2014, 389--403.

\end{thebibliography}
\end{document}